\title{Li-Yorke chaos for maps on $G$-Space}
\author{Yingcui Zhao}
\begin{document}
\newtheorem{theorem}{Theorem}[section]
\newtheorem{corollary}[theorem]{Corollary}
\newtheorem{lemma}[theorem]{Lemma}
\newtheorem{proposition}[theorem]{Proposition}
\newtheorem{problem}[theorem]{Problem}
\newtheorem{maintheorem}[theorem]{Main Theorem}
\newtheorem{definition}[theorem]{Definition}
\newtheorem{remark}[theorem]{Remark}
\newtheorem{example}[theorem]{Example}
\newtheorem{claim}{Claim}[section]
\maketitle

\begin{abstract}
 We introduce the definition of Li-Yorke chaos for the map $f$ on $G$-spaces, and show $G$-Li-Yorke chaos is iterable for $f$. Li-Yorke chaos implies $G$-Li-Yorke chaos, while the converse is not true. Then we give a sufficient condition for $f$ to be chaotic in the sense of $G$-Li-Yorke. Also, we prove that if $f$ is $G$-transitive and there exists a common fixed point for $f$ and all of the maps in $G$, then $f$ is chaotic in the sense of $G$-Li-Yorke.
\end{abstract}

\section{Introduction}
Chaos comes from practical problems such as physics, biology, economics, medicine and
chemistry. The theory of chaotic dynamical system is one of the challenging issue of dynamical systems. By convention, ``chaos'' means ``a state of confusion without order". While
in the theory of chaotic dynamical systems, ``chaos'' is a technical term. 
The term ``chaos'' in discrete dynamical systems was introduced firstly by Li and Yorke in 1975 \cite{LiYorke}. 

Let $(X,d)$ be a metric space and $f: X\rightarrow X$ be a continuous map. $Y\subset X$ is said to be a Li-Yorke chaotic set of $f$, if it contains at least two points and any pair of distinct points $x, y\in Y$ is a Li-Yorke pair, i. e.,
$$\liminf_{n\rightarrow\infty} d(f^{n}(x), f^{n}(y))=0,\ \limsup_{n\rightarrow\infty} d(f^{n}(x), f^{n}(y))>0.$$
Moreover, if there exists an uncountable Li-Yorke chaotic set $S\subset X$ of $f$, $f$ is said to be chaotic in the sense of Li-Yorke.

In the last fifty years, a large number of papers have been devoted to the study of chaos in the sense of Li-Yorke. Xiong\cite{G} showed that if $f$ is transitive and there exists a fixed point for $f$, then $f$ is chaotic in the sense of Li-Yorke. Wang\cite{Wang2013} gave a sufficient condition for $f$ to be chaotic in the strong sense of Li-Yorke. Recently, Li-Yorke chaos remains a research hotspot in the field of topological dynamical systems, as detailed in \cite{LY1,LY2,LY3,LY4}. 

In addition to the topology domain, chaos theory is also being researched in other settings. Recently, chaos is being defined and studied for group actions, set-valued maps, iterated funtion system and so on. The present paper focuses on $G$-space.

$G$-spaces, in the field of topological dynamical systems, provides a framework for studying the dynamics of actions of a group on a topological space. By investigating $G$-spaces, researchers can gain insights into the behavior and properties of dynamical systems under group actions, leading to a deeper understanding of the interplay between topology and dynamics in various contexts. Additionally, the study of G-spaces can help uncover new phenomena, reveal connections between different areas of mathematics, and contribute to the development of theoretical foundations for applications in fields such as physics, engineering, and biology. In recent years, an increasing number of scholars have started to pay attention to the study of $G$-spaces. Ali and Ekta defined and studied the notion of chain transitivity for maps on G-spaces in \cite{Ali2019}. Phinao and Khundrakpam investigated the $G$-mixing, $G$-sensitive and $G$-shadowing property of $f$ in \cite{Phinao2021}. Raad and Iftichar studied the sequence $G$-asymptotic average shadowing with $G$-chain transitivity in \cite{Raad2020}. For more references, please see \cite{G1,G2}.

The aim of the present paper is to define and study Li-Yorke chaos for maps on $G$-spaces. 
The specific layout of the present paper is as follows. In Section 2, we introduce some preliminaries and definitions. In Section 3, we show $G$-Li-Yorke chaos is iterable for equivariant maps. In Section 4, We study the relationship between Li-Yorke and $G$-Li-Yorke chaos. In Section 5. We give a sufficient condition for $G$-Li-Yorke chaos. In Section 6, We give a theorem which can be used to identify whether a map is $G$-Li-Yorke chaotic or not. In Section 7, we summarize the conclusions.
\section{Preliminaries and Basic Concepts}
\label{Sec:2}
Throughout this paper, let $(X,d)$ be a metric space and $f$ be a continuous map from X to X. Let $\mathbf{N}=\{0,1,2,\cdots\}$, $\mathbf{Z^+}=\{1,2,3,\cdots\}$, $\mathbf{Z}=\{\cdots,-2,-1,0,1,2,\cdots\}$.
Let $G$ be a topological group and $\varphi$ be a continuous map from $G\times X$ to $X$\cite{Ahmadi2014}. The $(X,G,\varphi)$ or $X$ is said to be a metric $G$-space if the following conditions hold:
\begin{itemize}
	\item[(1)] $\varphi(e,x)=x$ for all $x\in X$ where $e$ is the identity of $G$.
	\item[(2)]$\varphi(g_1,\varphi(g_2,x))=\varphi(g_1g_2,x)$ for all $x\in X$ and all $g_1,g_2\in G$.
\end{itemize}
If $X$ is compact, then $X$ is said to be compact metric $G$-space. For the convenience of writing, $\varphi(g,x)$ is usually abbreviated as $gx$.
For $x\in X$, the $G-orbit$ of $x$, denoted by $G_f(x)$, is given as the set $\{gf^k(x)|g\in G,k\geq0\}$.

The notion of equivariant map was introduced in \cite{Ekta2013}. We recall the definition.
\begin{definition}
	Let $(X,d)$ be a metric $G$-space. We say $f$ is an \emph{equivariant map}, if $f(gx)=gf(x)$ for all $x\in X$ and all $g\in G$.
\end{definition}
In \cite{Trandefi}, the notion of topological transitivity on $G$-space was defined and studied. We recall the definition.
\begin{definition}
	Let $(X,d)$ be a metric $G$-space. $f$ is said to be \emph{$G$-transitive}, if for any non-empty open sets $U,V\subset X$, there exist $n\in\mathbf{Z^+}$ and $g\in G$ such that $gf^n(U)\bigcap V\neq\emptyset$.
\end{definition}
Naturally, we define the notion of $G$-transitive point of $f$ as follows.
\begin{definition}
	Let $(X,d)$ be a metric $G$-space. $x\in X$ is said to be a \emph{$G$-transitive point} of $f$, if $\{gf^k(x)|g\in G,k\in\mathbf{N}\}$ is dense in $X$.
\end{definition}

Now we define the notion of $G$-recurrent point and $G$-Li-Yorke chaos for maps on $G$-spaces.
\begin{definition}
	Let $(X,d)$ be a metric $G$-space. A point $x\in X$ is said to be a \emph{$G$-recurrent point} of $f$, if there exist sequence $\{n_i\}_{i=1}^\infty$ and $\{g_i\}_{i=1}^\infty\subset G$ such that $\lim_{i\rightarrow\infty}g_if^{n_i}(x)=x.$ The set of all $G$-recurrent points of $f$ is denoted by $R_G(f).$
\end{definition}

\begin{definition}
	Let $(X,d)$ be a metric $G$-space. $Y\subset X$ is said to be a \emph{$G$-Li-Yorke chaotic set} of $f$, if it contains at least two points and any pair of distinct points $x, y\in Y$ is a Li-Yorke pair, i. e., there exists $\{g_n\}_{n=1}^\infty\subset G$ such that
	$$\liminf_{n\rightarrow\infty} d(g_nf^{n}(x), g_nf^{n}(y))=0,\ \limsup_{n\rightarrow\infty} d(g_nf^{n}(x), g_nf^{n}(y))>0.$$
	Moreover, if there exists an uncountable $G$-Li-Yorke chaotic set $S\subset X$ of $f$, $f$ is said to be chaotic in the sense of $G$-Li-Yorke.
\end{definition}

\section{Iteration of $G$-Li-Yorke chaos}
Before we show the iteration of $G$-Li-Yorke chaos for equivariant maps, we give a well known lemma.
%

\begin{lemma}\label{lem1}
	Let $\{n_{i}\}$ be a sequence of positive integers. For any $n>0$,
	there exist $r$, $0\leq r<n$, a subsequence $\{n_{i_{j}}\}$ of
	$\{n_{i}\}$ and a sequence $\{q_{j}\}$ of non-negative integers such
	that $n_{i_{j}}=nq_{j}+r$.
\end{lemma}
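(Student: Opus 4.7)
The plan is to apply the division algorithm to each $n_i$ and then invoke the pigeonhole principle on the possible remainders. Concretely, for each index $i$, I would use Euclidean division to write
\[
n_i = n q_i + r_i, \qquad 0 \le r_i < n, \quad q_i \in \mathbf{N}.
\]
This step produces a sequence of remainders $\{r_i\}$ lying entirely in the finite set $\{0,1,\ldots,n-1\}$.

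Next I would argue by pigeonhole: since the sequence $\{r_i\}_{i=1}^\infty$ takes only finitely many values, at least one value $r \in \{0,1,\ldots,n-1\}$ must be attained for infinitely many indices $i$. Selecting those indices in increasing order gives a subsequence $\{n_{i_j}\}$ of $\{n_i\}$ all of whose terms satisfy $r_{i_j} = r$. Setting $q_j := q_{i_j}$, one obtains $n_{i_j} = n q_j + r$ with $0 \le r < n$ and $q_j \in \mathbf{N}$, which is precisely the required decomposition.

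There is essentially no obstacle here; the statement is a standard application of the pigeonhole principle. The only minor point worth explicit attention is that the $q_j$ are non-negative (rather than merely integers), which follows from the fact that each $n_i$ is a positive integer and the standard Euclidean division guarantees $q_i \ge 0$. Thus the entire argument reduces to the division algorithm plus pigeonhole, and no compactness or analytic ingredient is needed.
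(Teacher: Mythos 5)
Your argument is correct: Euclidean division followed by the pigeonhole principle on the finitely many possible remainders is exactly the standard proof of this fact, which the paper states without proof as a well-known lemma. No gaps — the remark that the quotients are non-negative because each $n_i$ is a positive integer is the only detail needing mention, and you handle it.
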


\begin{theorem}\label{prop1}
	Let $(X,d)$ be a metric $G$-space. Suppose that $f$ is an equivariant map. $f$ is chaotic in the sense of $G$-Li-Yorke if and only if for any $N\in\mathbf{Z^+}$, $f^N$ is chaotic in the sense of $G$-Li-Yorke.
\end{theorem}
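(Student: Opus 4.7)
\emph{Plan.} The theorem is an iff, so I treat the two directions separately. Both rely on Lemma~\ref{lem1} and on the equivariance identity $gf^k=f^kg$, valid for all $g\in G$ and $k\geq 0$ by iterating $f\circ g=g\circ f$.

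The direction ``$f^N$ chaotic $\Rightarrow f$ chaotic'' is the easy one. Let $S$ be an uncountable $G$-Li-Yorke scrambled set for $f^N$; the claim is that the same $S$ is scrambled for $f$. For distinct $x,y\in S$ with witness $\{g_n\}\subset G$ for $f^N$, define $\{h_k\}\subset G$ by $h_{Nn}:=g_n$ and $h_k:=e$ when $N\nmid k$. Along the subsequence $k=Nn$, $d(h_kf^kx,h_kf^ky)=d(g_nf^{Nn}x,g_nf^{Nn}y)$, so the overall liminf is $\leq 0$ (hence $=0$) and the overall limsup is at least the positive limsup of the $f^N$-witness.

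The direction ``$f$ chaotic $\Rightarrow f^N$ chaotic'' is the substantive one. Let $S$ be an uncountable $G$-Li-Yorke scrambled set for $f$. For distinct $x,y\in S$ with witness $\{g_n\}\subset G$, set $a_n:=d(g_nf^nx,g_nf^ny)$; by hypothesis $\liminf a_n=0$ and $\limsup a_n>0$. Pick a subsequence $\{n_i\}$ with $a_{n_i}\to 0$ and apply Lemma~\ref{lem1}: there exist $r\in\{0,\dots,N-1\}$ and a sub-subsequence $n_{i_j}=Nq_j+r$ with $a_{n_{i_j}}\to 0$. Equivariance rewrites
\[
 g_{n_{i_j}}f^{n_{i_j}}x \;=\; g_{n_{i_j}}(f^N)^{q_j}(f^rx),
\]
so the pair $(f^rx,f^ry)$ with the sequence $h_j:=g_{n_{i_j}}$ satisfies $d(h_j(f^N)^{q_j}(f^rx),h_j(f^N)^{q_j}(f^ry))\to 0$, which is the liminf condition for $f^N$ on $(f^rx,f^ry)$. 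The same argument applied to a subsequence on which $a_n>\delta>0$ produces a residue $s$ and the limsup condition for $f^N$ on $(f^sx,f^sy)$.

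To conclude, one must synchronize the residues $r$ and $s$ on a single pair and extract an uncountable scrambled set for $f^N$. The plan is a pigeonhole over the $N^2$ choices $(r,s)\in\{0,\dots,N-1\}^2$ applied to the uncountable family of pairs in $S$, followed by a re-selection of the witness $\{g_n\}$ -- using the freedom in the $G$-Li-Yorke definition together with equivariance -- to place small- and large-distance indices in a common residue class, collapsing the constant $(r^\ast,s^\ast)$ produced by pigeonhole to a single value $r^\ast$. Then $f^{r^\ast}(S^\ast)$ is the required uncountable $G$-Li-Yorke scrambled set for $f^N$. This residue-synchronization is the main obstacle: the Lemma refines only one subsequence at a time, so aligning the liminf-residue with the limsup-residue for a single pair is precisely what makes the forward direction nontrivial.
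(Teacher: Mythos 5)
Your ``$f^N$ chaotic $\Rightarrow f$ chaotic'' direction is fine (it even proves the stronger statement that chaoticity of a single iterate passes down to $f$, while the theorem's sufficiency only needs the trivial case $N=1$). The forward direction, however, has a genuine gap: you stop exactly at the step you yourself call the main obstacle, and the plan you sketch for it does not work. First, the ``pigeonhole over the $N^2$ choices $(r,s)$ applied to the uncountable family of pairs'' is a colouring of the \emph{pairs} of an uncountable set, and such a colouring need not admit an uncountable monochromatic subset (Sierpi\'nski's colouring of the pairs of $\mathbb{R}$ into two classes has no uncountable homogeneous set), so pigeonhole alone does not produce your $S^\ast$. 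Second, even for one fixed pair you give no actual mechanism for the ``re-selection of the witness'' that is supposed to merge the residue $r$ coming from the liminf subsequence with the residue $s$ coming from the limsup subsequence. Note also that your rewriting $g_nf^{Nq+r}=g_n(f^N)^q f^r$ uses no equivariance at all, whereas the example following the theorem shows equivariance cannot be dropped, so a correct argument must use it in an essential way.

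The paper's proof avoids residue synchronization entirely by never leaving the original pair $(x,y)$. For the liminf part it extracts (using limit points) a common limit $x_0$ of $g_{n_i}f^{n_i}(x)$ and $g_{n_i}f^{n_i}(y)$, writes $n_i=Nq_i+r$ by Lemma \ref{lem1}, and then applies $f^{N-r}$: by equivariance and continuity, $f^{N-r}\bigl(g_{Nq_i+r}(f^N)^{q_i}(f^r(x))\bigr)=g_{Nq_i+r}(f^N)^{q_i+1}(x)\to f^{N-r}(x_0)$ and likewise for $y$, so the liminf for $f^N$ vanishes on $(x,y)$ itself. For the limsup part it extracts limits $a_1,b_1$ of $g_{Nq_i+r}(f^N)^{q_i}(x)$ and $g_{Nq_i+r}(f^N)^{q_i}(y)$ \emph{before} composing with $f^r$; equivariance gives $f^r(a_1)=a\neq b=f^r(b_1)$, hence $a_1\neq b_1$ and the limsup for $f^N$ is positive, again on $(x,y)$. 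Interleaving the two (infinite, disjointly chosen) index sets produces a single witness sequence $\{h_n\}\subset G$, so the same set $S$ is an uncountable $G$-Li-Yorke chaotic set for $f^N$, and no passage to $f^{r^\ast}(S^\ast)$ is needed. To complete your route you would have to supply precisely this kind of ``push forward by $f^{N-r}$'' step; as written, the proposal is incomplete.
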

\begin{proof}
	Let $N=1$, sufficiency is obvious. Now we show the necessity.
	
	Let $N\in\mathbf{Z^+}$. Suppose that $f$ is chaotic in the sense of $G$-Li-Yorke. Then there exists an uncountable $G$-Li-Yorke chaotic set $S\subset X$ of $f$. For any given distinct points $x, y\in S$, there exists $\{g_n\}_{n=1}^\infty\subset G$ such that
	$$\liminf_{n\rightarrow\infty} d(g_nf^{n}(x), g_nf^{n}(y))=0,\ \limsup_{n\rightarrow\infty} d(g_nf^{n}(x), g_nf^{n}(y))>0.$$
	\begin{itemize}
		\item[(1)] By $\liminf_{n\rightarrow\infty} d(g_nf^{n}(x), g_nf^{n}(y))=0$, there exist $x_0\in X$ and an infinite sequence $\{n_i\}$ such that $\lim_{n_i\rightarrow\infty} g_{n_i}f^{n_i}(x)=x_0$ and $\lim_{n_i\rightarrow\infty} g_{n_i}f^{n_i}(y)=x_0$. By Lemma \ref{lem1}, there exist $r, 0\leq r<N$ and a sequence $\{q_{i}\}$
		such that
		$$\lim_{{q_{i}}\rightarrow\infty}g_{Nq_i+r}(f^{N})^{q_{i}}(f^{r}(x))=x_{0}, ~~~\text{and}~~~ \lim_{{q_{i}}\rightarrow\infty}g_{Nq_i+r}(f^{N})^{q_{i}}(f^{r}(y))=x_{0}.$$
		Let $l=N-r$. Since $f$ is an equivariant map, as $i\rightarrow\infty$,
		$$f^{l}(g_{Nq_i+r}(f^{N})^{q_{i}}(f^{r}(x)))=g_{Nq_i+r}(f^{N})^{q_{i}+1}(x)\rightarrow
		f^l(x_{0}),$$
		$$f^{l}(g_{Nq_i+r}(f^{N})^{q_{i}}(f^{r}(y)))=g_{Nq_i+r}(f^{N})^{q_{i}+1}(y)\rightarrow
		f^l(x_{0}).$$
		
		\item[(2)] By $\limsup_{n\rightarrow\infty} d(g_nf^{n}(x), g_nf^{n}(y))>0$, there exist distinct points $a, b\in X$ and an infinite sequence $\{n_i\}$ such that $\lim_{n_i\rightarrow\infty} g_{n_i}f^{n_i}(x)=a$ and $\lim_{n_i\rightarrow\infty} g_{n_i}f^{n_i}(y)=b$. By Lemma \ref{lem1}, there exist $r, 0\leq r<N$ and a sequence $\{q_{i}\}$
		such that
		$$\lim_{{q_{i}}\rightarrow\infty}g_{Nq_i+r}(f^{N})^{q_{i}}(f^{r}(x))=a ~~~\text{and}~~~ \lim_{{q_{i}}\rightarrow\infty}g_{Nq_i+r}(f^{N})^{q_{i}}(f^{r}(y))=b.$$ There exist a subsequence $\{q_{i_{j}}\}$ of $\{q_{i}\}$ and
		$a_{1},b_{1}\in X$ such that
		$$\lim\limits_{j\rightarrow\infty}g_{Nq_{i_j}}(f^{N})^{q_{i_{j}}}(x)=a_{1} ~~~\text{and}~~~ \lim\limits_{j\rightarrow\infty}g_{Nq_{i_j}}(f^{N})^{q_{i_{j}}}(y)=b_{1}.$$
		Since $f$ is an equivariant map, as $j\rightarrow\infty$,
		$$f^{r}g_{Nq_{i_j}}(f^{N})^{q_{i_{j}}}(x)=g_{Nq_{i_j}}(f^{N})^{q_{i_{j}}}(f^r(x))\rightarrow f^r(a_{1})=a,$$
		$$f^{r}g_{Nq_{i_j}}(f^{N})^{q_{i_{j}}}(y)=g_{Nq_{i_j}}(f^{N})^{q_{i_{j}}}(f^r(y))\rightarrow f^r(b_{1})=b.$$
		Then $a_{1}\neq b_{1}$.
	\end{itemize}
	By $(1)$ and $(2)$, there exists $\{h_n\}_{n=1}^\infty\subset G$ such that
	$$\liminf_{n\rightarrow\infty} d(h_nf^{n}(x), h_nf^{n}(y))=0,\ \limsup_{n\rightarrow\infty} d(h_nf^{n}(x), h_nf^{n}(y))>0.$$
	Hence, $(x,y)$ is a $G$-Li-Yorke pair of $f^N$. Then, $S$ is an uncountable $G$-Li-Yorke chaotic set of $f^N$. So, $f^N$ is chaotic in the sense of $G$-Li-Yorke.
\end{proof}
The following example shows the condition "$f$ is an equivariant map" in Proposition \ref{prop1} can't be removed.
\begin{example}
	Consider $f,g_1,g_2:[-2,2]\rightarrow[-2,2]$ defined as, $f(x)=-|x|$,
	\begin{equation} g_{1}(x)=
		\begin{cases}
			\frac{2}{3}x-\frac{2}{3}, & -2\leq x\leq -\frac{1}{2}, \nonumber\\
			2x, & -\frac{1}{2}< x\leq 0, \nonumber\\
			x, & 0<x\leq 2,
		\end{cases}
	\end{equation}
	\begin{equation} g_{2}(x)=
		\begin{cases}
			-2x-2, & -2\leq x\leq -\frac{1}{2}, \nonumber\\
			-\frac{2}{5}x-\frac{6}{5}, & -\frac{1}{2}<x\leq 2.
		\end{cases}
	\end{equation}
	Suppose that $G$ is generated by any combination and iteration of $g_1,g_2,\phi,g_1^{-1},g_2^{-1}$, where $\phi$ is an identify map, i. e. $\phi(x)=x, x\in [-2,2]$. Note that $g_1f(\frac{1}{2})=-1$, while $fg_1(\frac{1}{2})=-\frac{1}{2}$. So, $f$ is not an equivariant map. As we all know, tent map $h_1(x)=1-|1-2x|$, $x\in[0,1]$ is chaotic in the sense of Li-Yorke. Suppose that $h(x)=-x$, $x\in[-1,0]$ and
	\begin{equation} h_{2}(x)=
		\begin{cases}
			-2x-2, & -1\leq x\leq -\frac{1}{2}, \nonumber\\
			2x, & -\frac{1}{2}<x\leq 0.
		\end{cases}
	\end{equation}
	Then, $h$ is the topological conjugation from $h_1$ to $h_2$. Hence, $h_2$ is also chaotic in the sense of Li-Yorke. So, $[-1,0]$ is an uncountable $G$-Li-Yorke chaotic set of $f$. That is $f$ is chaotic in the sense of $G$-Li-Yorke.
	
	However, $f^2$ is not chaotic in the sense of $G$-Li-Yorke.
\end{example}
\section{Relationship between Li-Yorke and G-Li-yorke chaos}
Since $G$ is a group, it contains identity element. Then we have the following theorem.
\begin{theorem}\label{thm1}
	Let $(X,d)$ be a metric $G$-space. If $f$ is chaotic in the sense of Li-Yorke, then it is chaotic in the sense of $G$-Li-Yorke.
\end{theorem}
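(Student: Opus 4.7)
The plan is to exploit the hypothesis that $G$ is a group by using its identity element $e$ to convert a classical Li-Yorke pair into a $G$-Li-Yorke pair with the constant sequence $g_n = e$.

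First, I would unpack the assumption: since $f$ is Li-Yorke chaotic, there exists an uncountable Li-Yorke chaotic set $S \subset X$. Pick any distinct $x, y \in S$. By definition,
\[
\liminf_{n\to\infty} d(f^n(x), f^n(y)) = 0, \qquad \limsup_{n\to\infty} d(f^n(x), f^n(y)) > 0.
\]
Next, I would define the sequence $\{g_n\}_{n=1}^\infty \subset G$ by $g_n = e$ for all $n \geq 1$. Using axiom (1) of a metric $G$-space, $\varphi(e, z) = z$ for every $z \in X$, so in particular $g_n f^n(x) = f^n(x)$ and $g_n f^n(y) = f^n(y)$. Substituting these identities into the two limit conditions above gives immediately
\[
\liminf_{n\to\infty} d(g_n f^n(x), g_n f^n(y)) = 0, \qquad \limsup_{n\to\infty} d(g_n f^n(x), g_n f^n(y)) > 0,
\]
which is precisely the condition for $(x,y)$ to be a $G$-Li-Yorke pair.

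Since this argument applies to every distinct pair $x, y \in S$, the same set $S$ serves as a $G$-Li-Yorke chaotic set of $f$. Because $S$ is uncountable, $f$ is chaotic in the sense of $G$-Li-Yorke. There is really no substantive obstacle here; the whole content of the theorem is the trivial observation that the class of admissible modulating sequences in the $G$-Li-Yorke definition contains the constant sequence $(e, e, e, \ldots)$, so the $G$-notion is a formal weakening of the classical one. The only thing to be careful about is invoking the $G$-space axiom $\varphi(e, x) = x$ explicitly rather than treating it as obvious notation.
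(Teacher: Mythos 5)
Your proof is correct and follows exactly the idea the paper intends: the remark preceding the theorem ("Since $G$ is a group, it contains identity element") is precisely your observation that the constant sequence $g_n=e$ turns every classical Li-Yorke pair into a $G$-Li-Yorke pair, so the same uncountable set $S$ works. You have simply written out the details the paper leaves implicit.
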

However, the converse of Theorem \ref{thm1} is not true.
\begin{example}
	Consider $f,g_1,g_2:[-2,2]\rightarrow[-2,2]$ defined as $f(x)=x$,
	\begin{equation} g_{1}(x)=
		\begin{cases}
			x, & -2\leq x\leq 0, \nonumber\\
			2x, & 0< x\leq \frac{1}{2}, \nonumber\\
			\frac{2}{3}x+\frac{2}{3}, & \frac{1}{2}<x\leq 2,
		\end{cases}
	\end{equation}
	\begin{equation} g_{2}(x)=
		\begin{cases}
			-\frac{2}{5}x+\frac{6}{5}, & -2\leq x\leq \frac{1}{2}, \nonumber\\
			-2x+2, & \frac{1}{2}<x\leq 2.
		\end{cases}
	\end{equation}
	It is easy to see $f$ is not chaotic in the sense of Li-Yorke. Suppose that $G$ is generated by any combination and iteration of $g_1,g_2,f,g_1^{-1},g_2^{-1}.$ Then, $[0,1]$ is an uncountable $G$-Li-Yorke chaotic set of $f$. Hence, $f$ is chaotic in the sense of $G$-Li-Yorke. So, Li-Yorke chaos $\nRightarrow$ $G$-Li-Yorke chaos.
\end{example}
\section{A sufficient condition for $G$-Li-Yorke chaos}
Now, we give a sufficient condition for $f$ to be chaotic in the sense of $G$-Li-Yorke.
\begin{proposition}
	Let $(X,d)$ be a metric $G$-space and $f$ be an equivariant map. Let $\{n_{i}\}_{i=1}^\infty$ be a sequence of
	positive integers and $\{A_{i}\}_{i=0}^\infty$ and
	$\{B_{i}\}_{i=0}^\infty$ be decreasing sequences of compact sets
	satisfying
	$$\bigcap^\infty_{i=0}A_{i}=\{a\}, ~~~ \bigcap^\infty_{i=0}B_{i}=\{b\},$$
	where $a,b\in X, a\neq b$. If for any $i\geq0$, there exists $g_i\in G$ such that $$A_{i+1}\bigcup B_{i+1}\subset g_{n_i}f^{n_i}(A_i)\bigcap g_{n_i}f^{n_i}(B_i),$$ then $f$ is chaotic in the sense of $G$-Li-Yorke.
\end{proposition}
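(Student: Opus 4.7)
The plan is to mimic the classical Xiong/Marotto construction: build, for each binary sequence $\alpha \in \{0,1\}^{\mathbb{N}}$, a point $x_\alpha$ whose orbit under the ``twisted'' maps $F_i := g_{n_i} f^{n_i}$ visits $A_i$ or $B_i$ according to $\alpha_i$. To begin, set $m_k := n_0 + n_1 + \cdots + n_{k-1}$ and $h_k := g_{n_{k-1}} g_{n_{k-2}} \cdots g_{n_0}$. Using equivariance $f \circ g = g \circ f$ repeatedly to slide the group elements past the powers of $f$, one obtains the identity $F_{k-1} \circ \cdots \circ F_0 = h_k \circ f^{m_k}$, which is precisely the form $(\textrm{group element})\cdot f^{(\textrm{time})}$ demanded by the $G$-Li-Yorke definition.

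For each $\alpha$ let $C_i^\alpha := A_i$ if $\alpha_i = 0$ and $C_i^\alpha := B_i$ if $\alpha_i = 1$. The hypothesis gives $C_{i+1}^\alpha \subset F_i(C_i^\alpha)$ regardless of $\alpha_i$. I would then define
$$E_k^\alpha := \bigl\{ x \in C_0^\alpha : (F_j \circ \cdots \circ F_0)(x) \in C_{j+1}^\alpha \text{ for } 0 \leq j < k \bigr\}.$$
Each $E_k^\alpha$ is closed in the compact set $C_0^\alpha$, hence compact, and non-empty by a backwards selection of preimages, starting from any $y_k \in C_k^\alpha$ and using $C_{j+1}^\alpha \subset F_j(C_j^\alpha)$ to lift step by step to $y_0 \in C_0^\alpha$. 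The $E_k^\alpha$ are nested and decreasing, so compactness yields $x_\alpha \in \bigcap_k E_k^\alpha$ satisfying $z_k^\alpha := h_k f^{m_k}(x_\alpha) \in C_k^\alpha$ for every $k$.

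To get an \emph{uncountable} chaotic set I need an uncountable family $\mathcal A \subset \{0,1\}^{\mathbb{N}}$ such that any two distinct sequences in $\mathcal A$ agree at infinitely many coordinates \emph{and} disagree at infinitely many. A block-encoding accomplishes this: for each $t \in (0,1)$ with non-terminating binary expansion $(a_k(t))$, form $\alpha^t$ by the concatenation $a_1,\,0,\,a_1 a_2,\,0,\,a_1 a_2 a_3,\,0,\ldots$, inserting a $0$-separator after the $k$-th prefix. The separators enforce infinitely many shared coordinates; and if $s \neq t$ then $a_j(s) \neq a_j(t)$ for some $j$, so the $j$-th entry of every subsequent block differs, giving infinitely many disagreements. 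For distinct $\alpha, \beta \in \mathcal A$ I would define $g_n := h_k$ when $n = m_k$ and $g_n := e$ otherwise. When $\alpha_k = \beta_k$, both $z_k^\alpha$ and $z_k^\beta$ lie in the common set $C_k^\alpha$, whose diameter shrinks to $0$ (since $\bigcap_i A_i = \{a\}$ and $\bigcap_i B_i = \{b\}$), so along that subsequence $d(g_{m_k} f^{m_k}(x_\alpha), g_{m_k} f^{m_k}(x_\beta)) \to 0$, forcing $\liminf = 0$; when $\alpha_k \neq \beta_k$ the two points lie one near $a$ and one near $b$, so the distance tends to $d(a,b) > 0$, forcing $\limsup \geq d(a,b)$. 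Distinctness of the $x_\alpha$'s follows from the same eventual disjointness of $A_k$ and $B_k$, so $S := \{x_\alpha : \alpha \in \mathcal A\}$ is the required uncountable $G$-Li-Yorke chaotic set.

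The only genuinely delicate point is the combinatorial arrangement of $\mathcal A$: one must avoid pathological pairs (e.g., a sequence and its bitwise complement) for which agreement would fail on a cofinite set, and the block construction above is the standard remedy. Everything else---non-emptiness via compactness, the reduction to $h_k \circ f^{m_k}$ via equivariance, and the diameter-shrinking estimate---is routine.
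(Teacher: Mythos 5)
Your proposal is correct and follows essentially the same route as the paper: encode itineraries by binary sequences that agree and disagree infinitely often, produce for each sequence a point whose twisted orbit $h_k f^{m_k}(x_\alpha)$ tracks the prescribed $A_k$/$B_k$ pattern (the paper takes a limit point of finite-stage approximations, you take the intersection of the nested compact sets $E_k^\alpha$ --- the same device), and read off the Li--Yorke pair conditions from the shrinking diameters of $A_k$ and $B_k$. In fact your write-up supplies details the paper leaves implicit (the equivariance identity $F_{k-1}\circ\cdots\circ F_0 = h_k f^{m_k}$, the explicit choice of the sequence $\{g_n\}$, and the block construction of the uncountable index set), so no gaps remain.
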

\begin{proof}
	Let $$\mathcal{C}=\{c=C_0C_1\cdots|C_i\in\{A_i,B_i\},i=0,1,\cdots\},$$
	and $E\subset\Sigma_2=\{x=x_0x_1\cdots|x_i\in\{0,1\},i=0,1,\cdots\}$ be an uncountable set such
	that for any distinct points $s=s_{0}s_{1}\cdots,
	t=t_{0}t_{1}\cdots\in E, s_{n}=t_{n}$ for infinitely many $n$ and
	$s_{m}\neq t_{m}$ for infinitely many $m$. Denote $\varphi(x):E\rightarrow \mathcal{C}$ by $\varphi(x)=C_0C_1\cdots$, where
	\begin{equation} C_i=
		\begin{cases}
			A_i, & x_i=0, \nonumber\\
			B_i, & x_i=1,
		\end{cases}
	\end{equation}
	for any $i=0,1,2,\cdots$. Let $D=\varphi(E)$. It is easy to see $\varphi$ is injective. Since $E$ is uncountable, $\varphi(E)$ is uncountable. Let $c=C_0C_1\cdots\in D$ and $k>1$, then there exists $x_k\in C_0$ such that for every $i=0,1,\cdots,k-1$, $$g_{n_i}f^{n_i}g_{n_{i-1}}f^{n_{i-1}}\cdots g_{n_0}f^{n_0}(x_k)\in C_{i+1}.$$ Let $x_c$ be one of limit points of $\{x_k\}_{k=0}^\infty$ and $S=\{x_c|c\in D\}$, then $S$ is an uncountable $G$-Li-Yorke chaotic set of $f$. So, $f$ is chaotic in the sense of $G$-Li-Yorke.
\end{proof}
\section{Main Result}

Before presenting the main result, we introduce some definitions and list several lemmas playing an important role in the proof of the main result.

$x\in X$ is said to be a \emph{periodic point} of $f$, if there exists $n>0$ such that $f^n(x)=x$. The smallest such positive integer $n$ is said to be the \emph{period} of
$x$. And the periodic point $x$ with period $n$ is said to be a \emph{$n$-periodic point}. When $n=1$, $n$-periodic point $x$ is said to be a \emph{fixed point} of $f$. 
\begin{definition}
	$x\in X$ is said to be a \emph{$G$-recurrent point} of $f$, if there exist $\{g_n\}_{n=1}^\infty\subset G$ and a sequence $\{n_i\}\subset \mathbf{Z^+}$ such that $$\lim_{i\rightarrow\infty}g_if^{n_i}(x)=x.$$
	We use $R_G(f)$ to denote the set of the $G$-recurrent points of $f$.
\end{definition}
\begin{lemma}\label{lem2}\cite{G} Let $H$ be a family of real-valued functions from $X$ to $[0, +\infty)$. Denote $\mu: X\rightarrow[0, +\infty)$ as: for any $x\in X$, $$\mu(x)=inf\{h(x)| h\in H\}. $$If each $h\in H$ is semicontinuous, then $g$ is semicontinuous.
\end{lemma}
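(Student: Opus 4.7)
The statement as written has a minor typo ($g$ should be $\mu$), and ``semicontinuous'' in this context must mean \emph{upper} semicontinuous, since the pointwise infimum of an arbitrary family of lower semicontinuous functions need not be lower semicontinuous (a decreasing limit of lower semicontinuous functions can fail to be lower semicontinuous). So the target I would prove is: if every $h\in H$ is upper semicontinuous, then $\mu=\inf_{h\in H}h$ is upper semicontinuous.

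My plan is to verify upper semicontinuity directly through the sublevel-set characterization: a function $\mu\colon X\to[0,+\infty)$ is upper semicontinuous if and only if the sublevel set $\{x\in X:\mu(x)<\alpha\}$ is open for every $\alpha\in\mathbb{R}$. First I would unwind the definition of the infimum to get the key set identity
\[
\{x\in X:\mu(x)<\alpha\}=\bigcup_{h\in H}\{x\in X:h(x)<\alpha\},
\]
which holds because $\mu(x)<\alpha$ is equivalent to the existence of some $h\in H$ with $h(x)<\alpha$. Then I would invoke the upper semicontinuity of each individual $h$ to conclude that every set on the right is open, so their union is open, which is exactly what we need.

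As an alternative (and perhaps cleaner) presentation I could use the sequential formulation: fix $x\in X$ and a sequence $x_n\to x$, fix $\varepsilon>0$, and choose $h\in H$ with $h(x)<\mu(x)+\varepsilon$. From $\mu(x_n)\le h(x_n)$ together with upper semicontinuity of $h$ one obtains
\[
\limsup_{n\to\infty}\mu(x_n)\le \limsup_{n\to\infty}h(x_n)\le h(x)<\mu(x)+\varepsilon,
\]
and then let $\varepsilon\downarrow0$.

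There is no real obstacle here; this is a standard fact about pointwise infima. The only subtleties worth flagging in the write-up are (i) identifying the correct variant of semicontinuity (upper, not lower), since the statement in the excerpt is ambiguous, and (ii) pointing out that no assumption about $H$ being countable or $X$ being separable is needed, because openness is preserved under arbitrary unions.
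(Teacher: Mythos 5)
Your proof is correct, and in fact there is nothing in the paper to compare it against: Lemma \ref{lem2} is stated without proof, being quoted from Xiong \cite{G}. Your sublevel-set argument, $\{x\in X:\mu(x)<\alpha\}=\bigcup_{h\in H}\{x\in X:h(x)<\alpha\}$, is the standard one, and it is worth your observation that it needs no countability of $H$, since later (Lemma \ref{lem5}) the infimum is taken over the possibly uncountable family $\{x\mapsto d(gf^{n}(x),x):g\in G\}$. Your reading of ``semicontinuous'' as \emph{upper} semicontinuous is also the right resolution of the ambiguity: the functions to which the lemma is applied are continuous, and the subsequent appeal to Lemma \ref{lem3} requires the sublevel sets of the resulting infimum to be open in order to produce a dense $G_\delta$ set, which is exactly what upper semicontinuity delivers; the sequential variant you sketch is equally valid since $X$ is metric.
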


\begin{lemma}\label{lem3}\cite{G}
	Suppose that for each integer $i\geq 1$, $h_i: X\rightarrow [0, +\infty]$ is semicontinuous. Let $a\in [0, +\infty]$, $$g(x)=\liminf_{i\rightarrow\infty}h_i(x), x\in X. $$
	If the set $g^{-1}([0, a])$ is dense in $X$, then it is a dense $G_\delta$ set of $X$.
\end{lemma}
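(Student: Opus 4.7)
The plan is to reduce the statement to a purely set-theoretic identity that realizes $g^{-1}([0,a])$ as a countable intersection of open sets. Since the density of $g^{-1}([0,a])$ is assumed as a hypothesis, once we know it is $G_\delta$ the conclusion is immediate. The case $a=+\infty$ is trivial (then $g^{-1}([0,a])=X$), so I would focus on $a<\infty$.

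The central step will be establishing the identity
\[
g^{-1}([0,a]) \;=\; \bigcap_{k=1}^{\infty}\bigcap_{N=1}^{\infty}\bigcup_{i=N}^{\infty}\bigl\{x\in X : h_i(x) < a + 1/k\bigr\}.
\]
For the forward inclusion, if $g(x)\le a$, then using monotonicity of $\inf_{i\ge N}h_i(x)$ in $N$ one has $\inf_{i\ge N}h_i(x)\le \liminf_i h_i(x)\le a<a+1/k$ for every $N$ and $k$, so for every such pair there is some index $i\ge N$ with $h_i(x)<a+1/k$. Conversely, if $x$ lies in the right-hand side, then for each $k$ and $N$ one has $\inf_{i\ge N}h_i(x)<a+1/k$; sending $N\to\infty$ gives $\liminf_i h_i(x)\le a+1/k$ for all $k$, hence $g(x)\le a$.

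Once the identity is in hand, the lemma follows by invoking the (upper) semicontinuity of each $h_i$, consistent with the convention adopted in Lemma \ref{lem2}: each sublevel set $\{x:h_i(x)<a+1/k\}$ is open, and hence each countable union $\bigcup_{i\ge N}\{h_i<a+1/k\}$ is open. The double countable intersection over $N$ and $k$ then exhibits $g^{-1}([0,a])$ as a $G_\delta$ set, and combined with the assumed density of $g^{-1}([0,a])$ we conclude that it is a dense $G_\delta$ set of $X$.

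The only delicate point is keeping track of strict versus non-strict inequalities in the $\liminf$ characterization; this is why one cannot simply write $g^{-1}([0,a])=\bigcap_N\bigcup_{i\ge N}\{h_i\le a\}$ (those sets are only closed, not open) and instead must pad with $1/k$ to reduce to the open sublevel sets provided by upper semicontinuity. No completeness assumption on $X$ is needed for the $G_\delta$ conclusion itself; density enters only through the hypothesis.
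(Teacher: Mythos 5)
Your proof is correct: the identity $g^{-1}([0,a])=\bigcap_{k\ge 1}\bigcap_{N\ge 1}\bigcup_{i\ge N}\{x: h_i(x)<a+1/k\}$ holds (with the trivial case $a=+\infty$ handled separately), and reading ``semicontinuous'' as upper semicontinuous --- the only convention under which Lemma \ref{lem2} is true --- each set in the union is open, so the right-hand side is a $G_\delta$ set, which together with the assumed density gives the conclusion. The paper itself offers no proof of this lemma (it is quoted from \cite{G}), and your argument is exactly the standard one used there, so there is nothing substantive to compare beyond noting the agreement.
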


\begin{lemma}\label{lem4}
	Let $(X,d)$ be a metric $G$-space. Suppose that $f$ is an equivariant map. If there exist a $G$-transitive point of $f$ and a fixed point $\vartheta\in X$ of $f$ which satisfies $g(\vartheta)=\vartheta$ for any $g\in G$, then there exists a dense $G_\delta$ set $B$ in $X\times X$ such that for any $(x_1, x_2)\in B$, 
	$$\liminf_{n\rightarrow\infty} d(f^{n}(u), f^{n}(v))=0,\forall u\in G(x_1), \forall v\in G(x_2).$$
\end{lemma}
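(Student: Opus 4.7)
The plan is to realize the desired set $B$ as the zero level set of a convenient $\liminf$ function built from distances to the common fixed point $\vartheta$, obtaining the $G_\delta$ property automatically from Lemma \ref{lem3} once density has been established; the conclusion for arbitrary $u\in G(x_1)$ and $v\in G(x_2)$ is then pushed through via equivariance together with the hypothesis $g\vartheta=\vartheta$.

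Concretely, I would define $h_n(x_1,x_2) = d(f^n(x_1),\vartheta) + d(f^n(x_2),\vartheta)$ and $\mu(x_1,x_2) = \liminf_{n\to\infty} h_n(x_1,x_2)$. Each $h_n$ is continuous on $X\times X$, hence semicontinuous, so Lemma \ref{lem2} says each $\inf_{n\geq k} h_n$ is semicontinuous, and Lemma \ref{lem3} then ensures that $B:=\mu^{-1}(\{0\})$ is a dense $G_\delta$ subset of $X\times X$ as soon as $B$ itself is dense. Suppose $(x_1,x_2)\in B$ and pick a subsequence $\{n_i\}$ with $h_{n_i}(x_1,x_2)\to 0$, so that $f^{n_i}(x_1)\to\vartheta$ and $f^{n_i}(x_2)\to\vartheta$ simultaneously. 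For any $u=gx_1\in G(x_1)$ and $v=hx_2\in G(x_2)$, equivariance gives $f^{n_i}(u)=gf^{n_i}(x_1)$ and $f^{n_i}(v)=hf^{n_i}(x_2)$; joint continuity of the action together with $g\vartheta=h\vartheta=\vartheta$ sends both sequences to $\vartheta$, whence $d(f^{n_i}(u),f^{n_i}(v))\to 0$ and the $\liminf$ vanishes. Crucially, the single subsequence $\{n_i\}$ works uniformly across all of $G(x_1)\times G(x_2)$.

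The main obstacle is proving density of $B$: given any non-empty open $U_1,U_2\subset X$, any tolerance $\varepsilon>0$ and any threshold $N$, I need to produce $(x_1,x_2)\in U_1\times U_2$ and some $n\geq N$ with $d(f^n(x_j),\vartheta)<\varepsilon$ for $j=1,2$. I plan to adapt Xiong's classical fixed-point-plus-transitivity argument (cf.\ \cite{G}): first deduce $G$-transitivity of $f$ on non-empty opens from the existence of the $G$-transitive point $z$ (by the standard Baire-category argument applied to the open sets $\bigcup_{g\in G,\,k\geq 0}(gf^k)^{-1}(V)$ for $V$ ranging over a countable base), and then combine this transitivity with the continuity of each $g\in G$ at $\vartheta$ to refine $U_1$ and $U_2$ into smaller opens whose common iterate $f^n$ lies inside $B(\vartheta,\varepsilon)$. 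The delicate part is the $G$-component: a witness of the form $gf^k(z)\in B(\vartheta,\delta)$ does not by itself imply that $f^k(z)$ is close to $\vartheta$, and the hypothesis $g\vartheta=\vartheta$ together with the continuity of $g$ and $g^{-1}$ at $\vartheta$ is precisely what is needed to transfer such approximations between $f^k(z)$ and $gf^k(z)$. Once density is secured, Lemma \ref{lem3} delivers the dense $G_\delta$ set $B$, and the equivariance step sketched above finishes the proof.
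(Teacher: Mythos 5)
Your construction differs from the paper's in a meaningful way: the paper works directly with $F(x_1,x_2)=\liminf_{n\to\infty} d(f^n(x_1),f^n(x_2))$, claims $F$ vanishes on the dense set $G_f(\omega)\times G_f(\omega)$ ($\omega$ a $G$-transitive point) using the common fixed point $\vartheta$, and then applies Lemma \ref{lem3}; you instead take $h_n(x_1,x_2)=d(f^n(x_1),\vartheta)+d(f^n(x_2),\vartheta)$ and $B=\mu^{-1}(0)$ with $\mu=\liminf h_n$. Your variant has a real advantage on the back end: membership in $\mu^{-1}(0)$ propagates to the whole orbits $G(x_1)\times G(x_2)$ through equivariance, continuity of the action and $g\vartheta=\vartheta$, along one fixed subsequence --- a point the paper's proof never addresses at all. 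The use of Lemmas \ref{lem2} and \ref{lem3} to get the $G_\delta$ structure is also fine.

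The genuine gap is exactly where you place the ``main obstacle'': density of $B$ is never proved, and the step you call delicate does not go through. $G$-transitivity of the point $\omega$ only produces witnesses $g_jf^{n_j}(\omega)\to\vartheta$ with \emph{varying} group elements $g_j$; to convert $d(g_jf^{n_j}(\omega),\vartheta)<\delta$ into $d(f^{n_j}(\omega),\vartheta)<\varepsilon$ via continuity of $g_j^{-1}$ at $\vartheta$ you must choose $\delta$ after knowing $g_j$, whereas transitivity hands you $g_j$ only after $\delta$ is fixed, and the family $\{g^{-1}:g\in G\}$ need not be equicontinuous at $\vartheta$. The failure is not just technical: take $X=[0,1]$, $f=\mathrm{id}$, and $G$ a group of homeomorphisms of $[0,1]$ fixing $0$ and $1$ and possessing a dense orbit (e.g.\ Thompson's group acting by PL homeomorphisms). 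Then $f$ is equivariant, has a $G$-transitive point, and $\vartheta=0$ is a common fixed point, yet $f^n(x)=x$ for all $n$, so your $B=\mu^{-1}(0)=\{(0,0)\}$ is nowhere dense; the density you need is simply not a consequence of the stated hypotheses. (The same example also shows that the paper's own passage from $\lim_j g_jf^{n_j}(z_i)=\vartheta$ to $F(z)=0$ --- dropping the group elements from a twisted convergence --- is unjustified, so the difficulty you ran into reflects a genuine defect in the lemma rather than a removable technicality; but as a proof of the statement, your proposal stalls at the density step, which is its heart.)
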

\begin{proof}
	Denote $F:X\times X\rightarrow [0,+\infty)$ as for any $(u,v)\in X\times X$, $$F(x,y)=\liminf_{n\rightarrow\infty} d(f^{n}(u), f^{n}(v)).$$
	Let $\omega\in X$ be a $G$-transitive point of $f$. Then there exist a sequence $\{n_j\}_{j=3}^\infty$ and $\{g_j\}_{j=3}^\infty\subset G$ such that $\lim_{j\rightarrow\infty}g_jf^{n_j}(\omega)=\vartheta$. For any $Z=(z_1,z_2)\in G_{f}(\omega)\times G_{f}(\omega)$,
	there exist $k_1, k_2\in \mathbf{N}$ and $g_1,g_2\in G$ such that $g_1f^{k_1}(\omega)=z_1,g_2f^{k_2}(\omega)=z_2.$ Then for $i=1,2$ $$\lim_{j\rightarrow\infty}g_jf^{n_j}(z_i)=\lim_{j\rightarrow\infty}g_jf^{n_j}(g_if^{k_i}(\omega))=\vartheta.$$
	
	Hence, $F(z)=0$. Since $G_{f}(\omega)\times G_f(\omega)$ is dense in $X\times X$, by Lemma \ref{lem3}, $F^{-1}(0)$ is a dense $G_\delta$ set in $X\times X$.
\end{proof}
\begin{lemma}\label{lem5}
	Let $(X,d)$ be a metric $G$-space and $f$ be a continuous map from $X$ to $X$. If the $G$-recurrent points set $R_G(f)$ is dense in $X$, then $R_G(f)$ is a dense $G_\delta$ set.
\end{lemma}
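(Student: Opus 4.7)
The plan is to realize $R_G(f)$ as $F^{-1}(\{0\})$ for some function $F$ constructed as a $\liminf$ of upper semicontinuous functions, and then invoke Lemma~\ref{lem3}.

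For each $i\in\mathbf{Z^+}$, I would set
$$h_i(x)=\inf_{g\in G} d(gf^i(x),x),\qquad x\in X.$$
For every fixed $g\in G$, the map $x\mapsto d(gf^i(x),x)$ is continuous, since $f^i$ is continuous and the slice $x\mapsto gx=\varphi(g,x)$ is continuous. A continuous function is semicontinuous, so Lemma~\ref{lem2} applied to the family $\{x\mapsto d(gf^i(x),x):g\in G\}$ yields that $h_i$ is semicontinuous. Define $F(x)=\liminf_{i\to\infty}h_i(x)$.

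Next I would verify $R_G(f)=F^{-1}(\{0\})$. If $x\in R_G(f)$, then there exist $g_j\in G$ and $n_j\in\mathbf{Z^+}$ with $n_j\to\infty$ and $g_jf^{n_j}(x)\to x$; then $h_{n_j}(x)\le d(g_jf^{n_j}(x),x)\to 0$ and, since $n_j\to\infty$, this forces $\liminf_i h_i(x)=0$. Conversely, if $F(x)=0$, choose a subsequence $i_j\to\infty$ with $h_{i_j}(x)<1/j$ and then $g_j\in G$ with $d(g_jf^{i_j}(x),x)<2/j$; these sequences witness $x\in R_G(f)$.

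Since $R_G(f)=F^{-1}([0,0])$ is dense in $X$ by hypothesis, Lemma~\ref{lem3} (applied with $a=0$) concludes that $R_G(f)$ is a dense $G_\delta$ subset of $X$. The main technical point is the upper semicontinuity of each $h_i$ via Lemma~\ref{lem2}, which rests on continuity of the slice maps $x\mapsto gf^i(x)$; once that is in place, the identification of $R_G(f)$ with the $\liminf$ zero set is a direct unwinding of the definition and Lemma~\ref{lem3} finishes the argument.
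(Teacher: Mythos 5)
Your proposal is correct and follows essentially the same route as the paper: define $h_n(x)=\inf_{g\in G}d(gf^n(x),x)$, use Lemma~\ref{lem2} to get semicontinuity of each $h_n$, identify $R_G(f)$ with the zero set of $\liminf_n h_n$, and conclude via Lemma~\ref{lem3}. Your write-up just spells out the equivalence $R_G(f)=F^{-1}(\{0\})$ in more detail than the paper does.
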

\begin{proof}
	Denote $F:X\rightarrow[0,+\infty)$ as for any $x\in X$, $$F(x)=\liminf_{n\rightarrow\infty}\inf_{g\in G}d(gf^n(x),x).$$
	Then $x\in R_G(f)\Leftrightarrow F(x)=0$. So, $F^{-1}(0)$ is dense in $X$. For any $n\geq1$ and any $g\in G$, $d_n(x):=d(gf^n(x),x)$ is semicontinuous. By Lemma \ref{lem2}, for any $n\geq1$, $h_n(x)=\inf_{g\in G}d_n(x)$ is semicontinuous. Then by Lemma \ref{lem3} $R_G(f)=F^{-1}(0)$ is a dense $G_\delta$ set in $X$.
\end{proof}
\begin{lemma}\label{lem6}
	Let $(X,d)$ be a metric $G$-space. If $f$ is $G$-transitive, then $R_G(f\times f)$  is a dense $G_\delta$ set in $X\times X$.
\end{lemma}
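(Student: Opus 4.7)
The plan is to mimic the proof of Lemma \ref{lem5} verbatim, but applied to the product system $(X\times X, f\times f)$ regarded as a metric $G$-space under the diagonal action $g\cdot(x,y)=(gx,gy)$ and the sum metric $D((x_1,y_1),(x_2,y_2))=d(x_1,x_2)+d(y_1,y_2)$. Under these conventions, $X\times X$ inherits the metric $G$-space structure coordinatewise, $f\times f$ is continuous, and unfolding the definition of $R_G(f\times f)$ we see that $(x,y)\in R_G(f\times f)$ exactly when there exist sequences $\{g_i\}\subset G$ and $\{n_i\}\subset\mathbf{Z^+}$ with $g_i f^{n_i}(x)\to x$ and $g_i f^{n_i}(y)\to y$ \emph{simultaneously} along the same indices.

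Following the template of Lemma \ref{lem5}, I would then define $F:X\times X\to[0,+\infty)$ by
\[
F(x,y)=\liminf_{n\to\infty}\,\inf_{g\in G}\bigl[d(gf^{n}(x),x)+d(gf^{n}(y),y)\bigr],
\]
so that $R_G(f\times f)=F^{-1}(0)$. For each fixed $n\geq 1$ and $g\in G$, the expression in brackets is continuous on $X\times X$ (hence semicontinuous), so Lemma \ref{lem2} makes $h_{n}(x,y):=\inf_{g\in G}[d(gf^{n}(x),x)+d(gf^{n}(y),y)]$ semicontinuous, and then Lemma \ref{lem3} applied on $X\times X$ reduces the whole claim to showing that $F^{-1}(0)=R_G(f\times f)$ is dense in $X\times X$.

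The remaining task, and the main obstacle, is the density of $R_G(f\times f)$. The plan is to exploit $G$-transitivity of $f$ to manufacture joint return pairs. Given a basic open box $U\times V\subset X\times X$, I would iteratively invoke $G$-transitivity to construct nested non-empty open sets $U_{k}\subset U_{k-1}\subset U$ and $V_{k}\subset V_{k-1}\subset V$ together with $g_{k}\in G$ and $n_{k}\in\mathbf{Z^+}$ satisfying $g_{k}f^{n_{k}}(U_{k})\subset U_{k-1}$ and $g_{k}f^{n_{k}}(V_{k})\subset V_{k-1}$ with the \emph{same} pair $(g_{k},n_{k})$ at each stage, shrinking the diameters to zero; a point in $\bigcap_{k}\overline{U_{k}\times V_{k}}$ would then witness membership in $R_G(f\times f)\cap(U\times V)$. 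The delicate step is the synchronization: bare $G$-transitivity produces separate $(g,n)$ for each pair of open sets, and forcing a common pair to work for both coordinates at once is essentially a $G$-weak-mixing requirement. I would expect to bridge this gap by leveraging compactness of $X$ (to extract convergent subsequences of returning elements) together with the group structure of $G$ (to translate one coordinate's return data into the other's via left multiplication), possibly combined with a Birkhoff-style minimal-set argument for the diagonal $G$-action on $X\times X$; this alignment is the part I expect to be hardest, and everything else is routine adaptation of Lemma \ref{lem5}.
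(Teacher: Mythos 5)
Your reduction of the lemma to the density of $R_G(f\times f)$ is fine, and it is essentially what the paper does: it simply applies Lemma \ref{lem5} to $f\times f$ on $X\times X$, which is your semicontinuity argument (Lemmas \ref{lem2} and \ref{lem3}) packaged as a lemma. The problem is the density of $R_G(f\times f)$ itself, which you correctly identify as the crux but leave unproved. Your nested-boxes scheme needs, at every stage, a single pair $(g_k,n_k)$ that sends $U_k$ into $U_{k-1}$ and \emph{simultaneously} $V_k$ into $V_{k-1}$; as you note, bare $G$-transitivity only supplies separate pairs for the two coordinates, and demanding a common pair is a weak-mixing-type hypothesis that is not available here. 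The remedies you gesture at (compactness to pass to subsequences, translating one coordinate's return data by group elements, a minimal-set argument) do not produce a common time $n_k$ for two unrelated open sets, so as written the proposal does not prove the lemma.

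The idea you are missing is to change what witnesses the density. The paper first upgrades $G$-transitivity to the existence of a $G$-transitive point: with $X$ (implicitly) compact it has a countable base $\{U_n\}$, the set of $G$-transitive points equals $\bigcap_{n}\bigcup_{m\geq 0}\bigcup_{g\in G} g f^{-m}(U_n)$, each of these unions is open and dense by $G$-transitivity, and the Baire theorem yields a $G$-transitive point $\omega$. The dense subset of $X\times X$ used for recurrence is then $G_f(\omega)\times G_f(\omega)$, pairs of points on one and the same dense orbit: for $z_1=g_1f^{k_1}(\omega)$ and $z_2=g_2f^{k_2}(\omega)$, a sequence of returns of $\omega$ to itself (available since the orbit of $\omega$ is dense) produces, by continuity (and the equivariance of $f$ used throughout the paper), simultaneous returns of $z_1$ and $z_2$ along a single common time sequence, so $G_f(\omega)\times G_f(\omega)\subset R_G(f\times f)$. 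Density of $R_G(f\times f)$ follows, and Lemma \ref{lem5} applied to $f\times f$ finishes the proof. In short, the synchronization obstacle you ran into is avoided by working along one orbit rather than trying to synchronize transitivity for two independent targets.
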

\begin{proof}
	Since $X$ is compact, there exist numerable topological basis $\{U_n\}_{n=1}^\infty$. By $$\{x\in X|\overline{G_f(x)}=X\}=\bigcap_{n=1}^\infty\bigcup_{m=0}^\infty\bigcup_{g\in G}gf^{-m}(U_n),$$
	and Proposition 2.4 of \cite{He2014}, the open set $\bigcup_{m=0}^\infty\bigcup_{g\in G}gf^{-m}(U_n)$ is dense in $X$. By Baire Theorem, $\{x\in X|\overline{G_f(x)}=X\}$ is dense in $X$. Then, there exists $\omega\in X$ being a $G$-transitive point of $f$. Hence, $\{gf^k(\omega)|g\in G, k\geq0\}$ is dense in $X$. Then, $$\{gf^k(\omega)|g\in G, k\geq0\}\times\{gf^k(\omega)|g\in G, k\geq0\}\subset R_G(f\times f).$$ So, $R_G(f\times f)$  is dense in $X\times X$. By Lemma \ref{lem5}, $R_G(f\times f)$  is a dense $G_\delta$ set in $X\times X$.
\end{proof}

\begin{theorem}\label{th1}
	Let $(X,d)$ be a metric $G$-space. Suppose that $f$ is an equivariant map. If $f$ is $G$-transitive and there exists a fixed point $\vartheta\in X$ of $f$ which satisfies $g(\vartheta)=\vartheta$ for any $g\in G$, then there exists a $\mathbf{c}$ dense $G$-Li-Yorke set in $X$ for $f$. Specifically, $f$ is chaotic in the sense of $G$-Li-Yorke.
\end{theorem}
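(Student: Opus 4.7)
The plan is to run the classical Mycielski-style argument used by Xiong to derive Li-Yorke chaos from transitivity plus a fixed point, adapted to the equivariant/$G$-setting using Lemmas \ref{lem4} and \ref{lem6}.

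First I would collect two dense $G_\delta$ subsets of $X\times X$. By Lemma \ref{lem4}, taking $(x_1,x_2)=(u,v)$ with $u\in G(x_1)$, $v\in G(x_2)$, I obtain a dense $G_\delta$ set
$$B=\{(x,y)\in X\times X:\liminf_{n\to\infty} d(f^n(x),f^n(y))=0\}.$$
By Lemma \ref{lem6}, $R_G(f\times f)$ is a dense $G_\delta$ in $X\times X$. Their intersection
$$\mathcal{R}:=B\cap R_G(f\times f)$$
is still a dense $G_\delta$ set in the compact metric space $X\times X$.

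Next I would apply Mycielski's theorem to $\mathcal{R}$. Since $X$ is a compact metric space and it admits a $G$-transitive point (so, unless $X$ is trivial, it has no isolated points), $X$ is a perfect Polish space. Mycielski's theorem then yields a dense subset $S\subset X$ of cardinality $\mathbf{c}$ (in fact the union of countably many Cantor sets) such that
$$(S\times S)\setminus\Delta\subset \mathcal{R}.$$

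Finally, I would verify that $S$ is a $G$-Li-Yorke chaotic set. Fix distinct $x,y\in S$. Since $(x,y)\in B$, there is a sequence $m_k\uparrow\infty$ with $d(f^{m_k}(x),f^{m_k}(y))\to 0$; setting $g_{m_k}=e$ handles the $\liminf$ side. Since $(x,y)\in R_G(f\times f)$, there exist $n_i\uparrow\infty$ and $h_i\in G$ with $h_i f^{n_i}(x)\to x$ and $h_i f^{n_i}(y)\to y$, so
$$d(h_i f^{n_i}(x),h_i f^{n_i}(y))\to d(x,y)>0.$$
After passing to subsequences we may assume $\{m_k\}\cap\{n_i\}=\emptyset$; defining $g_n=e$ on $\{m_k\}$, $g_{n_i}=h_i$ on $\{n_i\}$, and $g_n=e$ elsewhere gives a single sequence witnessing both $\liminf_n d(g_n f^n(x),g_n f^n(y))=0$ and $\limsup_n d(g_n f^n(x),g_n f^n(y))\geq d(x,y)>0$. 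Hence $(x,y)$ is a $G$-Li-Yorke pair of $f$.

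The main obstacle I expect is the setup for Mycielski: one must confirm that $X$ (compact metric, $G$-transitive with a common fixed point) really is perfect so the theorem applies and produces a dense $\mathbf{c}$-sized set, and one must verify that Lemma \ref{lem6}'s standing hypothesis (compactness of $X$, used implicitly when invoking a countable base and the Baire category argument) is in force. The rest of the argument is a straightforward weaving of the two subsequences coming from $B$ and from $R_G(f\times f)$ into a single sequence $\{g_n\}\subset G$.
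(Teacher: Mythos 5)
Your proposal is correct and follows essentially the same route as the paper: the paper likewise sets $D=R_G(f\times f)\cap B$ with $B$ from Lemma \ref{lem4} and $R_G(f\times f)$ from Lemma \ref{lem6}, notes $D$ is residual in $X\times X$, and applies Mycielski's theorem to get a dense set $K$ of cardinality $\mathbf{c}$ with all off-diagonal pairs in $D$. Your explicit weaving of the two subsequences (with $g_n=e$ on the proximality times and $g_{n_i}=h_i$ on the recurrence times) is exactly the verification the paper leaves implicit, so no further changes are needed.
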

\begin{proof}
	Let $D=R_G(f\times f)\bigcap B$, in which $B$ is the same with the $B$ in Lemma \ref{lem4}. Therefore, by Lemma \ref{lem4} and Lemma \ref{lem6} $D$ is a residual set in $X\times X$. Since $X$ is compact, it is complete and separable. Then by [2] there exists a dense Mycielski set $K$ such that for any distinct $x_1, x_2\in K, (x_1, x_2)\in D$.
	
	So, $K$ satisfies the condition of the present theorem.
\end{proof}

The following example shows the condition "a fixed point $\vartheta\in X$ of $f$ which satisfies $g(\vartheta)=\vartheta$ for any $g\in G$" in Theorem \ref{th1} can't be removed.

\begin{example}
	Suppose $\mathcal{S}^1$ is a unit circumference on complex plane. Denote $f$ as $f(z)=z$, $\forall z\in\mathcal{S}^1$, and $g(z)=ze^{2\pi i\lambda}$, $\forall z\in\mathcal{S}^1$, in which $\lambda$ is an irrational number. $G$ is generalized by $g$, i.e., $G=\{g^n|n\in\mathbf{Z}\}$, in which $g^0=f$. $f$ is an equivariant map. Since $g$ is transitive, $f$ is $G$-transitive. For any $z\in\mathcal{S}^1$, $g(z)\neq z$. Since $g$ is an isometric mapping, $f$ is not chaotic in the sense of $G$-Li-Yorke.
\end{example}

\begin{corollary}
	Let $(X,d)$ be a metric $G$-space. Suppose that $f$ is an equivariant map. If $f$ is $G$-transitive and there exists a $n$-periodic point $\varrho\in X$ of $f$ which satisfies $g(\varrho)=\varrho$ for any $g\in G$, then there exists a $\mathbf{c}$ dense $G$-Li-Yorke set in $X$ for $f$. Specifically, $f$ is chaotic in the sense of $G$-Li-Yorke.
\end{corollary}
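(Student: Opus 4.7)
The plan is to reduce the corollary to Theorem \ref{th1} by passing to the $n$-th iterate of $f$, with $\varrho$ playing the role of the $G$-fixed fixed point in that theorem. The first routine step is to verify that $f^n$ inherits the structural hypotheses of Theorem \ref{th1}: equivariance follows by iterating the identity $f(gx) = g f(x)$; the $n$-periodicity of $\varrho$ gives $f^n(\varrho) = \varrho$, so $\varrho$ is a fixed point of $f^n$; and the assumption $g(\varrho) = \varrho$ for every $g \in G$ is inherited verbatim.

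The main obstacle is to show that $f^n$ is $G$-transitive. The natural approach is to first invoke the Baire-category argument from Lemma \ref{lem6} to produce a $G$-transitive point $\omega$ of $f$, and then apply Lemma \ref{lem1} to partition the witnessing sequence $\{n_i\}$ into residue classes modulo $n$. This is precisely the technique already exploited in the proof of Theorem \ref{prop1}, part (1): a sequence $g_{n_i} f^{n_i}(\omega)$ can be rewritten, after passing to a subsequence with $n_i = n q_i + r$, as $g_{n q_i + r}(f^n)^{q_i}(f^r(\omega))$, which, together with equivariance, transfers the dense-orbit property from $f$ to $f^n$ up to a harmless shift by $f^r$.

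Once $f^n$ is known to satisfy the hypotheses of Theorem \ref{th1}, that theorem yields a $\mathbf{c}$-dense $G$-Li-Yorke chaotic set $K \subset X$ for $f^n$. The final step is to transfer the conclusion back to $f$. Given distinct $x, y \in K$ and a sequence $\{g_m\}_{m \geq 1} \subset G$ witnessing the $G$-Li-Yorke pair property for $f^n$, I would define $\{h_m\}_{m \geq 1} \subset G$ by $h_{nm} = g_m$ and $h_m = e$ otherwise. The $\liminf$ over all $m$ of $d(h_m f^m(x), h_m f^m(y))$ is bounded above by the liminf along the subsequence $m = nj$, which equals $0$; the $\limsup$ over all $m$ is bounded below by the limsup along the same subsequence, which is positive. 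Hence $(x, y)$ is a $G$-Li-Yorke pair for $f$, and $K$ is the required $\mathbf{c}$-dense $G$-Li-Yorke chaotic set for $f$, yielding $G$-Li-Yorke chaos in the sense of the definition.
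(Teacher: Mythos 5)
Your overall route is the same as the paper's: verify that $f^n$ is equivariant and that $\varrho$ is a fixed point of $f^n$ fixed by every $g\in G$, argue that $f^n$ is $G$-transitive, apply Theorem \ref{th1} to $f^n$, and pull the resulting chaotic set back to $f$. Your first and last steps are fine; in fact your transfer back to $f$ (interlacing the witnessing sequence $\{g_m\}$ with identity elements) is more careful than the paper's bare appeal to Theorem \ref{prop1}, whose nontrivial direction runs from $f$ to $f^N$ rather than in the direction needed here.

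The genuine gap is the middle step, the $G$-transitivity of $f^n$. The residue-class argument via Lemma \ref{lem1} only yields that the finite union $\bigcup_{r=0}^{n-1}\{g(f^n)^q(f^r(\omega))\mid g\in G,\ q\ge 0\}$ is dense in $X$; a dense finite union of $(G,f^n)$-orbits does not make any single orbit dense, and it gives neither a $G$-transitive point of $f^n$ nor the open-set form of $G$-transitivity that Lemmas \ref{lem4} and \ref{lem6} (hence Theorem \ref{th1}) require for $f^n$. Worse, the statement you are trying to prove does not follow from the ingredients you use: take $G=\{e\}$ and $f$ a transitive interval map that interchanges $[0,\frac12]$ and $[\frac12,1]$ (so $f^2$ is not transitive); such $f$ is equivariant, $G$-transitive, and has a point of period exactly $2$ which is (vacuously) fixed by $G$, yet $f^2$ is not $G$-transitive. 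So no argument relying only on a $G$-transitive point of $f$ together with Lemma \ref{lem1} can close this step --- the periodic point $\varrho$ or the group action must enter in an essential way. For comparison, the paper handles this step differently: on the orbit closure of a $G$-transitive point it uses equivariance and group inverses to absorb the time difference, writing $g_1f^{n_1}(\omega)=(g_1g_2^{-1}f^{n_1-n_2})(g_2f^{n_2}(\omega))$; that argument produces some power of $f$ rather than a power of $f^n$, so the step is delicate there as well, but in any case your residue-class justification does not establish it.
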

\begin{proof}
	By $G$-transitivity of $f$, there exists $\omega\in X$ being a $G$-transitive point of $f$. Set $$X_0=\overline{\{gf^n(\omega)|g\in G, n\in\mathbf{N}\}}.$$ For any given nonempty subsets $U,V$ of $X_0$, there exist $g_1,g_2\in G$, and $n_1,n_2\in\mathbf{N}$ such that $g_1f^{n_1}(\omega)\in U$ and $g_2f^{n_2}(\omega)\in V$. Without loss of generality, let $n_1\geq n_2$. Then, $(g_1g_2^{-1}f^{n_1-n_2})(g_2f^{n_2}(\omega))=g_1f^{n_1}(\omega)\in U$. So, $$g_1g_2^{-1}f^{n_1-n_2}(V)\bigcap U\neq\emptyset.$$ That is, $f^n$ is $G$-transitive. By Theorem \ref{thm1} $f^n$ is chaotic in the sense of $G$-Li-Yorke. By Proposition \ref{prop1} $f$ is chaotic in the sense of $G$-Li-Yorke.
\end{proof}
\section{Conclusions}
We introduce the definition of Li-Yorke chaos for the map on $G$-spaces. And we show:
\begin{itemize}
	\item[(1)]If $f$ is equivariant, then its Li-Yorke chaos is iterable. And the condition that $f$ is equivariant can't be removed.
	\item[(2)]Li-Yorke chaos $\Rightarrow\nLeftarrow$ $G$-Li-Yorke chaos.
	\item[(3)]A sufficient condition for $f$ to be $G$-Li-Yorke chaotic is given.
\end{itemize}

The above conclusions are the generalization of the continuous maps of general compact metric space. They will enrich the theory of $G$-space and iterated funtion systems. It provides the theoretical basis and scientific foundation for the application of chaos in computational mathematics and biological mathematics.

\end{document}